\documentclass{amsart}

\newtheorem{theorem}{Theorem}
\theoremstyle{plain}

\newtheorem{corollary}{Corollary}

\newtheorem{example}{Example}

\newtheorem{lemma}{Lemma}

\newtheorem{remark}{Remark}

\numberwithin{equation}{section}

\begin{document}

\noindent

\title[Gr\"{u}ss type inequalities]{Gr\"{u}ss type inequalities in semi-inner product $C^*$-modules and applications}

\author[A.Ghazanfari \and  S.Soleimani]{A. G. Ghazanfari$^{1,*}$, S. Soleimani$^{2}$}
\address{$^{1,2}$ Department of
Mathematics, Lorestan University, P.O.Box 465, Khoramabad, Iran.}

\email{$^{1}$ghazanfari.a@lu.ac.ir, $^{2}$s.soleimani81@yahoo.com}

\subjclass[2000]{Primary 46L08, 46H25; Secondary 46C99, 26D99.}

\keywords{Gr\"{u}ss inequality, semi-inner product $C^*$-modules.\\
\indent $^{*}$ Corresponding author.}

\begin{abstract}
Some Gr\"{u}ss type inequalities in semi-inner product modules over $C^*$-algebras for $n$-tuples of vectors are established.
Also we give their natural applications for the approximation of the discrete Fourier and the Melin transforms of bounded
linear operators on a Hilbert space.
\end{abstract}
\maketitle

\section{Introduction}
\noindent
The development of mathematical inequalities (Schwarz, triangle, Bessel, Gr\"{u}ss, Gram, Hadamard, Landau, \v{C}eby$\breve{s}$ev, Holder, Minkowsky, etc.) has experienced a surge, having been stimulated by their applications in different branches of pure and applied Mathematics.
These inequalities have been frequently used as powerful tools in obtaining bounds or estimating the errors for various approximation formulae occurring in the domains mentioned above. Therefore, any new advancement related to these fundamental facts will have a flow of important consequences in the mathematical fields where these inequalities have been used before.

For two Lebesgue integrable functions $f,g:[a,b]\rightarrow \mathbb{R}$, consider the
\v{C}eby$\breve{s}$ev functional:

\begin{equation*}
T(f,g):=\frac{1}{b-a}\int_{a}^{b}f(t)g(t)dt-\frac{1}{b-a}%
\int_{a}^{b}f(t)dt\frac{1}{b-a}\int_{a}^{b}g(t)dt.
\end{equation*}%

In 1934, G. Gr\"{u}ss \cite{gru} showed that
\begin{equation}\label{1.1}
\left\vert T(f,g)\right\vert \leq \frac{1}{4}%
(M-m)(N-n),
\end{equation}%
provided $m,M,n,N$ are real numbers with the property $-\infty <m\leq f\leq
M<\infty $ and $-\infty <n\leq g\leq N<\infty \quad \text{a.e. on }[a,b].$
The constant $\frac{1}{4}$ is best possible in the sense that it cannot be
replaced by a smaller quantity and is achieved for
\[
f(x)=g(x)=sgn \Big(x-\frac{a+b}{2}\Big).
\]
The discrete version of (\ref{1.1}) states that:
If $a\leq a_i\leq A,~b\leq b_i\leq B,~(i=1,...,n)$ where $a,A,b,B,a_i,b_i$ are real
numbers, then
\begin{equation}\label{1.2}
\left| \frac{1}{n}\sum_{i=1}^na_ib_i-\frac{1}{n}\sum_{i=1}^na_i.\frac{1}{n}\sum_{i=1}^nb_i\right|\leq \frac{1}{4}(A-a)(B-b),
\end{equation}
where the constant $\frac{1}{4}$ is the best possible for an arbitrary $n\geq 1$. Some refinements of the
discrete version of Gr\"{u}ss inequality (\ref{1.2}) are given in \cite{kech, kap}.
In \cite{dra2} some new inequalities of Schwarz and Buzano type
for $n$-tuples of vectors and applications for norm
and numerical radius inequalities for $n$-tuples of bounded linear operators are given.

In the recent years, the Gr\"{u}ss inequality (\ref{1.1}) has been investigated, applied
and generalized by many authors in different areas of mathematics, among others
in inner product spaces \cite{dra1},
in the approximation of integral transforms \cite{li} and the references therein,
 in semi-inner $*$-modules for positive linear functionals
and $C^*$-seminorms \cite{gha}, for positive maps \cite{mos1}, in
inner product modules over $H^*$-algebras and $C^*$-algebras \cite{ili}. A further extension of Gr\"{u}ss type inequality
 for Bochner integrals of vector-valued functions
in Hilbert $C^*$-modules is given in \cite{gha1}.

For an entire chapter devoted to the history of this inequality see \cite{mit}
where further references are given.

We recall some of the most important Gr\"{u}ss type discrete
inequalities for inner product spaces that are available in \cite{dra}.

\begin{theorem}\label{t1.1} Let $(H; \langle \cdot, \cdot\rangle)$ be an inner product space over $\mathbb{K};~
(\mathbb{K} = \mathbb{C},\mathbb{R}),~ x_i,~ y_i \in H,~ p_i \geq 0 ~(i = 1, . . . , n)~ (n \geq 2)$ with
$\sum_{i=1}^n p_i= 1$. If $x, X, y, Y\in H$ are such that
\begin{equation*}
 Re \left<X- x_i, x_i - x \right>\geq 0\quad and\quad Re \left<Y - y_i, y_i -y\right>\geq 0
\end{equation*}
for all $i\in \{1, . . . , n\}$, or, equivalently,
\begin{equation*}
\left\|x_i-\frac{x+X}{2}\right\|\leq\frac{1}{2}\|X-x\| \quad and \quad \left\|y_i-\frac{y+Y}{2}\right\|
\leq\frac{1}{2}\|Y-y\|
\end{equation*}
for all $i\in \{1, . . . , n\}$, then the following inequality holds
\begin{equation}\label{1.3}
\left|\sum_{i=1}^n p_i \langle x_i ,y_i \rangle-\left< \sum_{i=1}^n p_i x_i, \sum_{i=1}^n p_i y_i \right>\right|
\leq\frac{1}{4}\|X-x\|\|Y-y\|.
\end{equation}
The constant $\frac{1}{4}$ is best possible in
the sense that it cannot be replaced by a smaller quantity.
\end{theorem}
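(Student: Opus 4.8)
The plan is to reduce the Gr\"uss functional to a ``variance'' form and then bound each variance separately. Writing $\bar{x}:=\sum_{i=1}^{n}p_{i}x_{i}$ and $\bar{y}:=\sum_{i=1}^{n}p_{i}y_{i}$, and using $\sum_{i}p_{i}=1$ together with additivity and homogeneity of $\langle\cdot,\cdot\rangle$ (and the fact that the $p_{i}$ are real), a direct expansion yields the Korkine-type identity
\begin{equation*}
\sum_{i=1}^{n}p_{i}\langle x_{i},y_{i}\rangle-\Big\langle\sum_{i=1}^{n}p_{i}x_{i},\ \sum_{i=1}^{n}p_{i}y_{i}\Big\rangle=\sum_{i=1}^{n}p_{i}\langle x_{i}-\bar{x},\ y_{i}-\bar{y}\rangle .
\end{equation*}
Hence it suffices to estimate the right-hand side.

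Next, by the triangle inequality for the modulus, the Schwarz inequality in $H$, and the weighted discrete Cauchy--Schwarz inequality (applied to the nonnegative numbers $\sqrt{p_{i}}\,\|x_{i}-\bar{x}\|$ and $\sqrt{p_{i}}\,\|y_{i}-\bar{y}\|$),
\begin{equation*}
\Big|\sum_{i=1}^{n}p_{i}\langle x_{i}-\bar{x},y_{i}-\bar{y}\rangle\Big|\le\sum_{i=1}^{n}p_{i}\|x_{i}-\bar{x}\|\,\|y_{i}-\bar{y}\|\le\Big(\sum_{i=1}^{n}p_{i}\|x_{i}-\bar{x}\|^{2}\Big)^{1/2}\Big(\sum_{i=1}^{n}p_{i}\|y_{i}-\bar{y}\|^{2}\Big)^{1/2},
\end{equation*}
so the theorem reduces to the single-$n$-tuple estimate $\sum_{i=1}^{n}p_{i}\|x_{i}-\bar{x}\|^{2}\le\frac14\|X-x\|^{2}$ and its analogue for the $y_{i}$.

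The crux is this last estimate, which is itself a Gr\"uss-type fact for one $n$-tuple. I would prove it from the equivalent hypothesis $\big\|x_{i}-\frac{x+X}{2}\big\|\le\frac12\|X-x\|$: putting $c:=\frac{x+X}{2}$, the centroid identity $\sum_{i}p_{i}\|x_{i}-c\|^{2}=\sum_{i}p_{i}\|x_{i}-\bar{x}\|^{2}+\|\bar{x}-c\|^{2}$ (which follows from $\sum_{i}p_{i}(x_{i}-\bar{x})=0$ and $\sum_{i}p_{i}=1$) gives
\begin{equation*}
\sum_{i=1}^{n}p_{i}\|x_{i}-\bar{x}\|^{2}=\sum_{i=1}^{n}p_{i}\|x_{i}-c\|^{2}-\|\bar{x}-c\|^{2}\le\frac14\|X-x\|^{2}-\|\bar{x}-c\|^{2}\le\frac14\|X-x\|^{2}.
\end{equation*}
For completeness I would verify the stated equivalence of the two forms of the hypothesis by expanding $\frac14\|X-x\|^{2}-\big\|x_{i}-\frac{x+X}{2}\big\|^{2}=\mathrm{Re}\langle X-x_{i},x_{i}-x\rangle$. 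Feeding the two variance bounds into the Cauchy--Schwarz chain produces \eqref{1.3}.

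Finally, to see that $\frac14$ is best possible I would exhibit a two-point example: take $H=\mathbb{R}$, $n=2$, $p_{1}=p_{2}=\frac12$, $x_{1}=y_{1}=x=y=-1$ and $x_{2}=y_{2}=X=Y=1$; then $\mathrm{Re}\langle X-x_{i},x_{i}-x\rangle=1-x_{i}^{2}=0$ for $i=1,2$, while the left-hand side of \eqref{1.3} equals $1$ and the right-hand side equals $\frac14\cdot 2\cdot 2=1$. The main obstacle is the single-$n$-tuple variance estimate; everything else is routine bookkeeping. Its proof is short once one invokes the ball reformulation of the hypothesis and the centroid identity, and the key conceptual point is recognizing that this two-stage reduction (Korkine identity, then a scalar-type variance bound) is available, rather than attempting a more elaborate Cauchy--Buzano estimate directly on $\sum_{i}p_{i}\langle x_{i}-\bar{x},y_{i}-\bar{y}\rangle$.
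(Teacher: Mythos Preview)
Your argument is correct. The overall architecture---reduce to a ``variance'' form, bound each variance by $\tfrac14\|X-x\|^2$ via the centroid identity with $c=\tfrac{x+X}{2}$, then check sharpness on a two-point configuration---matches the paper's route through Lemma~\ref{l2.1} and Theorem~\ref{t3.2}/Corollary~\ref{c3.1}. The one genuine difference is the cross-term step: you pass through the elementwise chain $|\sum_i p_i\langle x_i-\bar x,y_i-\bar y\rangle|\le\sum_i p_i\|x_i-\bar x\|\,\|y_i-\bar y\|\le(\sum_i p_i\|x_i-\bar x\|^2)^{1/2}(\sum_i p_i\|y_i-\bar y\|^2)^{1/2}$, whereas the paper observes that $G_{\overline p}(\,\cdot\,,\,\cdot\,)$ is itself an $\mathcal A$-valued semi-inner product on $X^n$ and applies the Schwarz inequality \eqref{2.2} once to get $|G_{\overline p}(\overline x,\overline y)|^2\le\|G_{\overline p}(\overline x,\overline x)\|\,G_{\overline p}(\overline y,\overline y)$. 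In the scalar case both yield the same bound, so nothing is lost; the advantage of the paper's formulation is that it transports verbatim to semi-inner product $C^*$-modules (where your intermediate step $|\langle u,v\rangle|\le\|u\|\,\|v\|$ would need to be replaced by the operator-valued Schwarz inequality anyway), while your chain is slightly more elementary and self-contained for the Hilbert-space statement at hand.
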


\begin{theorem}
\label{t1.2} Let $(H; \langle \cdot, \cdot\rangle)$ and $\mathbb{K}$ be as above and
$\overline{x}=(x_1, . . . , x_n)\in H^n$, $\overline{\alpha}=(\alpha_1, . . . , \alpha_n) \in \mathbb{K}^n$ and
$\overline{p}=(p_1,...,p_n)$ a probability vector. If $x,X \in H$ are
such that
\begin{equation*}
 Re \left<X- x_i, x_i - x \right>\geq 0~for~ all~ i\in \{1, . . . , n\},
\end{equation*}
or, equivalently,
\begin{equation*}
\left\|x_i-\frac{x+X}{2}\right\|\leq\frac{1}{2}\|X-x\| ~for~ all~ i\in \{1, . . . , n\},
\end{equation*}
holds, then the following inequality holds
\begin{align}\label{1.4}
\left\|\sum_{i=1}^np_i\alpha_i x_i-\sum_{i=1}^n p_i \alpha_i \sum_{i=1}^n p_ix_i\right\|
&\leq\frac{1}{2}\|X-x\|\sum_{i=1}^np_i\left|\alpha_i-\sum_{j=1}^np_j\alpha_j\right|\notag\\
&\leq\frac{1}{2}\|X-x\|\left[\sum_{i=1}^np_i|\alpha_i|^2-\left|\sum_{i=1}^np_i\alpha_i\right|^2\right]^{\frac{1}{2}}.
\end{align}
The constant $\frac{1}{2}$ in the first and second inequalities is best possible.
\end{theorem}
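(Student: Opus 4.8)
The plan is to prove the two inequalities by a single reduction, followed by a short extremal example for the sharpness claim. Write $\overline{\alpha}:=\sum_{j=1}^n p_j\alpha_j$ for the $\overline{p}$-weighted mean of the $\alpha_i$. Since $\sum_{i=1}^n p_i=1$, the vector to be estimated is
\begin{equation*}
\sum_{i=1}^n p_i\alpha_i x_i-\sum_{i=1}^n p_i\alpha_i\sum_{i=1}^n p_i x_i=\sum_{i=1}^n p_i(\alpha_i-\overline{\alpha})x_i,
\end{equation*}
and, because $\sum_{i=1}^n p_i(\alpha_i-\overline{\alpha})=0$, we may subtract any fixed vector $z\in H$ inside the sum:
\begin{equation*}
\sum_{i=1}^n p_i(\alpha_i-\overline{\alpha})x_i=\sum_{i=1}^n p_i(\alpha_i-\overline{\alpha})(x_i-z).
\end{equation*}
The decisive choice is $z=\frac{x+X}{2}$, for which the equivalent form of the hypothesis gives $\|x_i-z\|\leq\frac12\|X-x\|$ for every $i$.

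With this in hand, the first inequality in (\ref{1.4}) is immediate from the triangle inequality and the absolute homogeneity of the norm:
\begin{equation*}
\left\|\sum_{i=1}^n p_i(\alpha_i-\overline{\alpha})(x_i-z)\right\|\leq\sum_{i=1}^n p_i|\alpha_i-\overline{\alpha}|\,\|x_i-z\|\leq\frac12\|X-x\|\sum_{i=1}^n p_i|\alpha_i-\overline{\alpha}|.
\end{equation*}
For the second inequality I would apply the Cauchy--Schwarz inequality in $\mathbb{R}^n$ to the entries $\sqrt{p_i}$ and $\sqrt{p_i}\,|\alpha_i-\overline{\alpha}|$, obtaining $\sum_i p_i|\alpha_i-\overline{\alpha}|\leq\big(\sum_i p_i\big)^{1/2}\big(\sum_i p_i|\alpha_i-\overline{\alpha}|^2\big)^{1/2}$, and then invoke the elementary identity $\sum_i p_i|\alpha_i-\overline{\alpha}|^2=\sum_i p_i|\alpha_i|^2-|\overline{\alpha}|^2$ (expand the modulus squared and use $\sum_i p_i=1$); this is valid over $\mathbb{R}$ and $\mathbb{C}$ alike. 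Chaining the two steps yields (\ref{1.4}).

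For the sharpness of the constant $\frac12$ in both estimates, I would exhibit an explicit equality case. Take $H=\mathbb{R}$ (or any $H$ together with a unit vector), $n=2$, $p_1=p_2=\frac12$, $x_1=x=-1$, $x_2=X=1$, $\alpha_1=-1$, $\alpha_2=1$. Then $\mathrm{Re}\langle X-x_i,\,x_i-x\rangle\geq0$ holds for $i=1,2$, $\overline{\alpha}=0$, the left-hand side of (\ref{1.4}) equals $1$, and both upper bounds equal $\frac12\cdot\|X-x\|\cdot1=1$; hence equality holds throughout, so neither occurrence of $\frac12$ can be lowered.

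The argument is elementary and I do not anticipate a genuine obstacle; the only real idea is the insertion of the midpoint $z=\frac{x+X}{2}$, which is exactly the step that converts the hypothesis into the factor $\frac12\|X-x\|$ and around which the whole proof is organized. The minor points requiring care are the cancellation $\sum_i p_i(\alpha_i-\overline{\alpha})=0$ (which licenses the subtraction of $z$) and verifying that the extremal configuration genuinely satisfies the stated hypothesis with the prescribed endpoints $x$ and $X$.
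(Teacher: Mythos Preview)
Your proof is correct and follows essentially the same route as the paper. The paper does not prove Theorem~\ref{t1.2} directly (it is quoted from the literature), but its generalization in Remark~\ref{r3.1}(ii) proceeds via the identity~(\ref{3.1}) of Lemma~\ref{l2.1}---exactly your subtraction of the midpoint $z=\tfrac{x+X}{2}$ after centering the $\alpha_i$---followed by the triangle and Cauchy--Schwarz inequalities; your sharpness example is likewise in the spirit of the paper's Corollary~\ref{c3.1}.
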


Motivated by the above results in the present paper, we
obtain some further generalization of Gr\"{u}ss type inequalities in semi-inner product modules over
$C^*$-algebras. we give some analogue of the discrete Gr\"{u}ss inequality (\ref{1.2})
 for $n$-tuples of vectors, which are generalizations of Theorem \ref{t1.1} and Theorem \ref{t1.2}.
We also give some their applications for the approximation of the discrete Fourier and Melin transforms.
In order to do that we need the following preliminary definitions and results.
\section{Preliminaries}
The theory of Hilbert spaces plays a central role in contemporary mathematics with numerous applications for linear operators,
matrix analysis, partial differential equations, nonlinear analysis, approximation theory, optimization theory, numerical
analysis, probability theory, statistics and other fields.
Hilbert spaces have a rich geometric structure because they are endowed with an inner product that allows the
introduction of the concept of orthogonality of vectors.

Hilbert $C^*$-modules are used as the framework for Kasparov's bivariant K-theory and form the
technical underpinning for the $C^*$-algebraic approach to quantum groups. Hilbert $C^*$-modules
are very useful in the following research areas: operator K-theory, index theory for operator-valued
conditional expectations, group representation theory, the theory of $AW^*$-algebras, noncommutative
geometry, and others. Hilbert $C^*$-modules form a category in between Banach
spaces and Hilbert spaces and obey the same axioms as a Hilbert space except that the inner
product takes values in a general $C^*$-algebra rather than the complex number $\mathbb{C}$. This simple generalization
gives a lot of trouble. Fundamental and familiar Hilbert space properties like Pythagoras' equality, self-duality and
decomposition into orthogonal complements must be given up. Moreover,
a bounded module map between Hilbert $C^*$-modules does not need to have an adjoint; not every
adjointable operator needs to have a polar decomposition. Hence to get its applications, we have to
use it with great care.

Let $\mathcal{A}$ be a $C^*$-algebra. A semi-inner product module
over $\mathcal{A}$ is a right module $X$ over $\mathcal{A}$ together with a generalized semi-inner product,
that is with a mapping $\langle.,.\rangle$ on $X\times X$, which is $\mathcal{A}$-valued and has the following properties:
\begin{enumerate}
\item[(i)]
 $\langle x,y+z\rangle=\langle x,y\rangle+\langle x,z\rangle$ for all $x,y,z\in X,$
\item[(ii)]
$\left\langle x,ya\right\rangle=\left\langle x,y\right\rangle a$ for $x,y\in X, a\in \mathcal{A}$,
\item[(iii)]
$\langle x,y\rangle ^*=\langle y,x\rangle$ for all $x,y\in X$,
\item[(iv)]
$\left\langle x,x\right\rangle \geq 0$ for $x\in X$.
\end{enumerate}
We will say that $X$ is a semi-inner product $C^*$-module.
The absolute value of $x\in X$ is defined as the square root of $\langle x,x\rangle$, and it is denoted
by $|x|$.

If, in addition,
\begin{enumerate}
\item[(v)]
$\langle x,x\rangle=0$ implies $x=0$,
\end{enumerate}
then $\langle .,.\rangle$ is called a generalized inner
product and $X$ is called an inner product module over $\mathcal{A}$ or an inner product $C^*$-module.

As we can see, an inner product module obeys the same axioms as an ordinary
inner product space, except that the inner product takes values in a more general
structure rather than in the field of complex numbers.

If $\mathcal{A}$ is a $C^*$-algebra and $X$ is a semi-inner product $\mathcal{A}$-module, then the following Schwarz
inequality holds:
\begin{equation}
\langle x,y\rangle\langle y,x\rangle\leq \|\langle x,x\rangle\|\langle y,y\rangle~(x,y\in X)\label{2.1}
\end {equation}
($e.g.$ \cite[Proposition 1.1]{lan}).

It follows from the Schwarz inequality (\ref{2.1}) that
\begin{equation}
|\langle x,y\rangle|^2\leq \|\langle x,x\rangle\|\langle y,y\rangle\quad(x,y\in X)\label{2.2}
\end{equation}
(where, for $a\in \mathcal{A}, |a|=(a^*a)^\frac{1}{2}$).
Now let $\mathcal{A}$ be a $\ast $-algebra, $\varphi $ a positive linear functional on $%
\mathcal{A}$, and let $X$ be a semi-inner $\mathcal{A}$-module. We can define a
sesquilinear form on $X\times X$ by $\sigma (x,y)=\varphi \left(
\left\langle x,y\right\rangle \right) $; the Schwarz inequality for $\sigma $
implies that
\begin{equation}
\vert \varphi\langle x,y\rangle\vert^{2}\leq \varphi\langle x,x\rangle \varphi\langle y,y\rangle.  \label{2.3}
\end{equation}%
In \cite[Proposition 1, Remark 1]{gha} the authors present two other forms of the Schwarz inequality in
semi-inner $\mathcal{A}$-module $X$, one for
a positive linear functional $\varphi$ on $\mathcal{A}$:
\begin{equation}
\varphi(\langle x,y\rangle\langle y,x\rangle)
\leq \varphi \langle x,x\rangle r\langle y,y\rangle,  \label{2.4}
\end{equation}%
where $r$ is the spectral radius, and another one for a $C^*$-seminorm $\gamma$ on $\mathcal{A}$:
\begin{equation}
(\gamma\langle x,y\rangle) ^{2}\leq \gamma\langle x,x\rangle \gamma \langle y,y\rangle.  \label{2.5}
\end{equation}%

\section{Gr\"{u}ss type inequalities in inner product $C^*$-modules}


Let $X$ be an inner product $C^*$-module and $x,y,e\in X$, and let
$\left\langle e,e\right\rangle $ be an idempotent, we put
\[
G_e(x,y):=\langle x,y\rangle-\langle x,e\rangle\langle e,y\rangle.
\]
 By \cite[Lemma 2]{gha} or, a straightforward calculation shows that
\begin{align*}
\left\langle e\left\langle e,e\right\rangle -e,e\left\langle
e,e\right\rangle -e\right\rangle & =\left\langle e\left\langle
e,e\right\rangle ,e\left\langle e,e\right\rangle \right\rangle -\left\langle
e\left\langle e,e\right\rangle ,e\right\rangle -\left\langle e,e\left\langle
e,e\right\rangle \right\rangle +\left\langle e,e\right\rangle  \\
& =\left\langle e,e\right\rangle \left\langle e,e\right\rangle \left\langle
e,e\right\rangle -\left\langle e,e\right\rangle \left\langle
e,e\right\rangle -\left\langle e,e\right\rangle \left\langle
e,e\right\rangle +\left\langle e,e\right\rangle  \\
& =0,
\end{align*}%
therefore $e\left\langle e,e\right\rangle -e=0$.
This implies that
\begin{equation*}
\left\langle e,e\right\rangle \left\langle e,x\right\rangle =\left\langle
e,x\right\rangle ,\quad \left\langle x,e\right\rangle =\left\langle
x,e\right\rangle \left\langle e,e\right\rangle .
\end{equation*}

\begin{lemma}\label{l2.2}
Let $X$ be an inner product $\mathcal{A}$-module over $C^*$-algebra $\mathcal{A}$ and $x,y,e\in X$. If
$\left\langle e,e\right\rangle $ be an idempotent in $\mathcal{A}$, then for every $a, b\in \mathcal{A}$, we have
\begin{align}
&(i)~G_e(x,x)=\langle x,x\rangle-\langle x,e\rangle\langle e,x\rangle\geq0,\label{3.4}\\
&(ii)~G_e(x,x)\leq \langle x-ea, x-ea\rangle,\label{3.5}\\
&(iii)~ G_e(x-ea,y-eb)=\langle x-ea,y-eb\rangle-\langle x-ea,e\rangle\langle e,y-eb\rangle\label{3.6}\\
&\qquad\qquad\qquad\qquad\qquad=\langle x,y\rangle-\langle x,e\rangle\langle e,y\rangle=G_e(x,y).\notag
\end{align}
\end{lemma}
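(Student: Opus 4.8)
The plan is to deduce all three items from the two one--sided absorption identities established just above the lemma, namely $\langle e,e\rangle\langle e,x\rangle=\langle e,x\rangle$ and $\langle x,e\rangle=\langle x,e\rangle\langle e,e\rangle$ (valid for every $x\in X$), together with idempotency $\langle e,e\rangle\langle e,e\rangle=\langle e,e\rangle$ and positivity of the $\mathcal{A}$--valued inner product. Throughout I use that $\langle\cdot,\cdot\rangle$ is $\mathcal{A}$--linear in the second variable and conjugate--linear in the first, so in particular $\langle ea,y\rangle=a^{*}\langle e,y\rangle$ and $\langle x,eb\rangle=\langle x,e\rangle b$ for $a,b\in\mathcal{A}$.

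For (i), I would start from $0\le\langle x-e\langle e,x\rangle,\,x-e\langle e,x\rangle\rangle$, which holds by axiom (iv), and expand the right--hand side into four terms using the module axioms. The two mixed terms $\langle x,e\langle e,x\rangle\rangle$ and $\langle e\langle e,x\rangle,x\rangle$ both equal $\langle x,e\rangle\langle e,x\rangle$, while the diagonal term $\langle e\langle e,x\rangle,e\langle e,x\rangle\rangle=\langle x,e\rangle\langle e,e\rangle\langle e,x\rangle$ collapses to $\langle x,e\rangle\langle e,x\rangle$ by the absorption identity. Hence the right--hand side equals $\langle x,x\rangle-\langle x,e\rangle\langle e,x\rangle=G_{e}(x,x)$, which gives $G_{e}(x,x)\ge0$.

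For (ii), the clean route is an orthogonal decomposition: write $x-ea=\bigl(x-e\langle e,x\rangle\bigr)+e\bigl(\langle e,x\rangle-a\bigr)$ and put $c=\langle e,x\rangle-a$. Using $\langle x,e\rangle\langle e,e\rangle=\langle x,e\rangle$ one checks that the cross term $\langle x-e\langle e,x\rangle,\,ec\rangle$ vanishes, so the Pythagoras--type identity yields $\langle x-ea,x-ea\rangle=\langle x-e\langle e,x\rangle,\,x-e\langle e,x\rangle\rangle+\langle ec,ec\rangle$. The first summand is $G_{e}(x,x)$ by the computation in part (i), and the second is $\langle ec,ec\rangle=c^{*}\langle e,e\rangle c\ge0$ because $\langle e,e\rangle\ge0$ in $\mathcal{A}$; therefore $G_{e}(x,x)\le\langle x-ea,x-ea\rangle$. (Equivalently one may expand $\langle x-ea,x-ea\rangle$ directly and recognize the surplus over $G_{e}(x,x)$ as $c^{*}\langle e,e\rangle c$.)

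For (iii), I would expand $\langle x-ea,y-eb\rangle$ and $\langle x-ea,e\rangle\langle e,y-eb\rangle$ separately. From $\langle x-ea,e\rangle=\langle x,e\rangle-a^{*}\langle e,e\rangle$ and $\langle e,y-eb\rangle=\langle e,y\rangle-\langle e,e\rangle b$, multiplying out and absorbing each interior $\langle e,e\rangle$ (into $\langle x,e\rangle$, into $\langle e,y\rangle$, or via idempotency) gives $\langle x,e\rangle\langle e,y\rangle-\langle x,e\rangle b-a^{*}\langle e,y\rangle+a^{*}\langle e,e\rangle b$; subtracting this from $\langle x,y\rangle-\langle x,e\rangle b-a^{*}\langle e,y\rangle+a^{*}\langle e,e\rangle b$ cancels every $a$-- and $b$--dependent term and leaves $\langle x,y\rangle-\langle x,e\rangle\langle e,y\rangle=G_{e}(x,y)$. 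There is no deep obstacle here: the only thing requiring care is the bookkeeping, since the absorption identities are one--sided (the factor $\langle e,e\rangle$ merges only on the side facing $e$) and one must respect conjugate--linearity in the first slot; the single genuinely substantive observation is the orthogonality used in (ii).
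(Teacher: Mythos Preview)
Your proof is correct and follows essentially the same approach as the paper: part (i) by expanding $\langle x-e\langle e,x\rangle,\,x-e\langle e,x\rangle\rangle$, part (ii) via the orthogonal (Pythagoras-type) decomposition $x-ea=(x-e\langle e,x\rangle)+e(\langle e,x\rangle-a)$, and part (iii) by direct expansion and cancellation using the absorption identities. Your write-up is in fact slightly more careful than the paper's, which contains a typographical slip in the second summand of the decomposition in (ii); your version $\langle ec,ec\rangle$ with $c=\langle e,x\rangle-a$ is the correct one.
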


\begin{proof}
By a simple calculation, we get
\[
G_e(x,x)=\langle x,x\rangle-\langle x,e\rangle\langle e,x\rangle=\langle x-e\langle e,x\rangle,x-e\langle e,x\rangle\rangle\geq0.
\]
and
\[
\langle x-ea,x-ea\rangle=\langle x-e\langle e,x\rangle, x-e\langle e,x\rangle\rangle+\langle a-e\langle e,x\rangle, a-e
\langle e,x\rangle\rangle.
\]
Therefore,
\[
G_e(x,x)=\langle x-e\langle e,x\rangle, x-e\langle e,x\rangle\rangle\leq \langle x-ea,x-ea\rangle.
\]
A straightforward calculation shows that
\begin{multline*}
G_e(x-ea,y-eb)=\langle x-ea,y-eb\rangle-\langle x-ea,e\rangle\langle e,y-eb\rangle\\
=\langle x,y\rangle-\langle x,e\rangle b-a^*\langle e,y\rangle+a^*\langle e,e\rangle b-
\left[\langle x,e\rangle-a^*\langle e,e\rangle\right]\left[\langle e,y\rangle-\langle e,e\rangle b\right]\\
=\langle x,y\rangle-\langle x,e\rangle b-a^*\langle e,y\rangle+a^*\langle e,e\rangle b\\
-\langle x,e\rangle\langle e,y\rangle+\langle x,e\rangle b+a^*\langle e,y\rangle-a^*\langle e,e\rangle b\\
=\langle x,y\rangle-\langle x,e\rangle\langle e,y\rangle=G_e(x,y).
\end{multline*}
\end{proof}

\begin{theorem}\label{t3.1}
Let $X$ be an inner product $\mathcal{A}$-module over $C^*$-algebra $\mathcal{A}$ and $x,y,e\in X$. If
$\left\langle e,e\right\rangle $ be an idempotent in $\mathcal{A}$, then for every $a, b, c, d\in \mathcal{A}$, we have
\begin{multline}\label{3.7}
|\langle x,y\rangle-\langle x,e\rangle\langle e,y\rangle|
\leq\left\| x-e\left(\frac{a+b}{2}\right)\right\|~
\left| y-e\left(\frac{c+d}{2}\right)\right|\\
=\left\|\frac{1}{4}|e(a-b)|^2-Re\langle x-ea, eb-x\rangle\right\|^\frac{1}{2}\left(\frac{1}{4}|e(c-d)|^2-Re\langle y-ec, ed-y\rangle\right)^\frac{1}{2}.
\end{multline}
furthermore, if
\[
Re\langle x-ea, eb-x\rangle\geq 0,~~ Re\langle y-ec, ed-y\rangle\geq0,
\]
then
\[
|\langle x,y\rangle-\langle x,e\rangle\langle e,y\rangle|\leq\frac{1}{4}\|e(a-b)\|~|e(c-d)|.
\]
\end{theorem}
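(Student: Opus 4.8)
The plan is to observe that $G_e(\cdot,\cdot)$ is itself an $\mathcal{A}$-valued semi-inner product on $X$, apply the abstract Schwarz inequality \eqref{2.2} to it, and then convert the resulting terms $G_e(x,x)$ and $G_e(y,y)$ into the bounds claimed in \eqref{3.7} by means of \lemref{l2.2}.

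First I would check that $(X, G_e)$ is a semi-inner product $\mathcal{A}$-module: additivity and $\mathcal{A}$-linearity in the second argument are inherited directly from $\langle\cdot,\cdot\rangle$; the relation $G_e(x,y)^{*}=G_e(y,x)$ follows from $\langle\cdot,\cdot\rangle^{*}$-symmetry together with $(\langle x,e\rangle\langle e,y\rangle)^{*}=\langle y,e\rangle\langle e,x\rangle$; and positivity $G_e(x,x)\geq 0$ is exactly \eqref{3.4}. Therefore \eqref{2.2}, read with $G_e$ in place of $\langle\cdot,\cdot\rangle$, gives $|G_e(x,y)|^{2}\leq\|G_e(x,x)\|\,G_e(y,y)$. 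Now applying \eqref{3.5} with $\tfrac{a+b}{2}$ substituted for $a$ yields $G_e(x,x)\leq|x-e\tfrac{a+b}{2}|^{2}$, hence $\|G_e(x,x)\|\leq\|x-e\tfrac{a+b}{2}\|^{2}$, and similarly $G_e(y,y)\leq|y-e\tfrac{c+d}{2}|^{2}$ using $\tfrac{c+d}{2}$. Combining, $|G_e(x,y)|^{2}\leq\|x-e\tfrac{a+b}{2}\|^{2}\,|y-e\tfrac{c+d}{2}|^{2}$, and taking square roots — legitimate since $t\mapsto t^{1/2}$ is operator monotone on $\mathcal{A}_{+}$ and $(\lambda c)^{2}=\lambda^{2}c^{2}$ for a scalar $\lambda\geq 0$ — gives the first inequality of \eqref{3.7}.

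The displayed equality in \eqref{3.7} is the parallelogram-type identity
\[
\Bigl|x-e\tfrac{a+b}{2}\Bigr|^{2}=\tfrac14\,|e(a-b)|^{2}-Re\langle x-ea,\,eb-x\rangle,
\]
obtained by expanding the right side using $x-e\tfrac{a+b}{2}=\tfrac12\bigl[(x-ea)+(x-eb)\bigr]$ and $e(a-b)=(x-eb)-(x-ea)$; together with $\|z\|=\bigl\||z|^{2}\bigr\|^{1/2}$ for the $x$-factor (and its analogue for the $y$-factor) this rewrites the middle member of \eqref{3.7} as the last. For the ``furthermore'' part, the hypotheses $Re\langle x-ea,eb-x\rangle\geq 0$ and $Re\langle y-ec,ed-y\rangle\geq 0$ collapse these identities to $|x-e\tfrac{a+b}{2}|^{2}\leq\tfrac14|e(a-b)|^{2}$ and $|y-e\tfrac{c+d}{2}|^{2}\leq\tfrac14|e(c-d)|^{2}$; taking the norm of the first and the operator-monotone square root of the second gives $\|x-e\tfrac{a+b}{2}\|\leq\tfrac12\|e(a-b)\|$ and $|y-e\tfrac{c+d}{2}|\leq\tfrac12|e(c-d)|$, and feeding these into the first inequality of \eqref{3.7} produces $|G_e(x,y)|\leq\tfrac14\|e(a-b)\|\,|e(c-d)|$. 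The only point requiring care throughout is the systematic use of the $C^{*}$-order — that $A\leq B$ in $\mathcal{A}_{+}$ forces $\|A\|\leq\|B\|$ and $A^{1/2}\leq B^{1/2}$, and that order is preserved under multiplication by nonnegative scalars — everything else being routine computation in the module.
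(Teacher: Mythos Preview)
Your proposal is correct and follows essentially the same route as the paper: verify that $G_e$ is an $\mathcal{A}$-valued semi-inner product, apply the Schwarz inequality \eqref{2.2} to it, bound $G_e(x,x)$ and $G_e(y,y)$ via \eqref{3.5} with the midpoint arguments, and then invoke the parallelogram-type identity to rewrite the bounds and deduce the ``furthermore'' clause. You are simply more explicit than the paper about the semi-inner product axioms and the use of operator monotonicity of the square root, but the argument is the same.
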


\begin{proof}
It is easy to show that $G_e(\cdot,\cdot)$ is an $\mathcal{A}$-value semi-inner product on $X$.
Using Schwarz inequality (\ref{2.2}), we obtain
\begin{equation*}
|\langle x,y\rangle-\langle x,e\rangle\langle e,y\rangle|^2\leq\|\langle x,x\rangle-\langle x,e\rangle\langle e,x\rangle\|
\big(\langle y,y\rangle-\langle y,e\rangle\langle e,y\rangle\big).
\end{equation*}
From (\ref{3.5}), we get
\begin{equation*}
\langle x,x\rangle-\langle x,e\rangle\langle e,x\rangle
\leq\left\langle x-e\left(\frac{a+b}{2}\right),x-e\left(\frac{a+b}{2}\right)\right\rangle
\end{equation*}
and
\begin{equation*}
\langle y,y\rangle-\langle y,e\rangle\langle e,y\rangle
\leq\left\langle y-e\left(\frac{c+d}{2}\right),y-e\left(\frac{c+d}{2}\right)\right\rangle.
\end{equation*}

Since for any $y, x, x'\in X$
\begin{align*}
\dfrac{1}{4}|x'-x|^{2}-\left| y-\dfrac{x'+x}{2}\right|^{2}=Re\langle y-x', x-y\rangle,
\end{align*}
therefore, we have
\begin{equation*}
\left| x-e\left(\frac{a+b}{2}\right)\right|^2=\frac{1}{4}|e(a-b)|^2-Re\langle x-ea, eb-x\rangle,
\end{equation*}
\begin{equation*}
\left| y-e\left(\frac{c+d}{2}\right)\right|^2=\frac{1}{4}|e(c-d)|^2-Re\langle y-ec, ed-y\rangle.
\end{equation*}
The rest follows from these facts and we omit the details.
\end{proof}

\begin{example}
Let $L^1(H)$ be the set of all trace class operators on the Hilbert space $H$. It is known that $L^1(H)$ is a
Hilbert $B(H)$-module with the inner product defined by $\left\langle X,Y\right\rangle :=X^{\ast }Y$. If $E$ is a trace class operator
 such that $|E|$ is an idempotent in $B(H)$ then for every $ A, B, C, D\in B(H)$ and $X, Y\in L^1(H)$
we have
\begin{align*}
|X^*Y-X^*EY|^2&\leq\left\|\frac{1}{4}\big|E(A-B)\big|^2-Re\langle X-EA, EB-X\rangle\right\|\\
&\times\left|\frac{1}{4}\big|E(C-D)\big|^2-Re\langle Y-EC, ED-Y\rangle\right|\\
&=\left\| X-E\left(\frac{A+B}{2}\right)\right\|^2\left| Y-E\left(\frac{C+D}{2}\right)\right|^2.
\end{align*}
\end{example}

\section{Gr\"{u}ss type inequalities in semi-inner product $C^*$-modules}

Before stating the main results in this section, let us fix the rest of our notation. We assume unless stated otherwise,
throughout this paper $\mathcal{A}
$ is a $C^\ast $-algebra and $\overline{p}=(p_1,...,p_n)\in \mathbb{R}^n$ a
probability vector i.e.
$p_i \geq 0 \quad (i = 1, . . . , n)$ and $\sum_{i=1}^n p_i=1 $. If $X$ is a semi-inner product $C^*$-module and
$\overline{x}=(x_1,...,x_n), \overline{y}=(y_1,...,y_n)\in X^n$ we put
$$G_{\overline{p}}(\overline{x},\overline{y}):=\sum_{i=1}^n p_i \langle x_i ,y_i \rangle-
\left< \sum_{i=1}^n p_i x_i, \sum_{i=1}^n p_i y_i \right>.$$

\begin{lemma}\label{l2.1}
Let $X$ be a semi-inner product $C^*$-module, $a,b\in X$, $\overline{x}=(x_1,...,x_n), \overline{y}=(y_1,...,y_n)\in X^n,
\overline{\alpha}=(\alpha_1, . . . , \alpha_n)
 \in \mathbb{K}^n$; $(\mathbb{K} = \mathbb{C},\mathbb{R})$ and $\overline{p}=(p_1,...,p_n)\in \mathbb{R}^n$ a probability vector, then
\begin{equation}
\sum_{i=1}^np_i\alpha_i x_i-\sum_{i=1}^n p_i \alpha_i \sum_{i=1}^n p_ix_i=\sum_{i=1}^np_i\Big(\alpha_i-\sum_{j=1}^np_j\alpha_j\Big)
(x_i-a),\label{3.1}
\end{equation}
and
\begin{equation}
G_{\overline{p}}(\overline{x},\overline{y})=
\sum_{i=1}^n p_i\left<x_i -a, y_i -b \right>-\left< \sum_{i=1}^n p_i (x_i -a), \sum_{i=1}^n p_i (y_i-b)\right>.\label{3.2}
\end{equation}
In particular
\begin{equation}
G_{\overline{p}}(\overline{x},\overline{x})=\sum_{i=1}^n p_i\left|x_i -a\right|^2-\left| \sum_{i=1}^n p_i x_i -a\right|^2\label{3.3}
\leq\sum_{i=1}^n p_i\left|x_i -a\right|^2.
\end{equation}
\end{lemma}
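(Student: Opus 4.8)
The plan is to verify the three identities by direct expansion, the only ingredients being additivity of the semi-inner product in each variable (axiom (i)), the harmless fact that the real scalars $p_i$ may be pulled in and out of either slot of $\langle\cdot,\cdot\rangle$, and the normalization $\sum_{i=1}^n p_i = 1$. No deeper structure of the $C^*$-module is needed.

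For (\ref{3.1}) I would start from the right-hand side, distribute the factor $\alpha_i - \sum_{j}p_j\alpha_j$ over $x_i - a$, and split the resulting sum into four pieces. Two of them, namely $\sum_i p_i\alpha_i x_i$ and $-\bigl(\sum_j p_j\alpha_j\bigr)\sum_i p_i x_i$, are exactly the left-hand side; the remaining two, $-\bigl(\sum_i p_i\alpha_i\bigr)a$ and $+\bigl(\sum_j p_j\alpha_j\bigr)\bigl(\sum_i p_i\bigr)a$, cancel precisely because $\sum_i p_i = 1$.

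For (\ref{3.2}) I would expand both terms on the claimed right-hand side using additivity. Writing $\langle x_i - a, y_i - b\rangle = \langle x_i,y_i\rangle - \langle x_i,b\rangle - \langle a,y_i\rangle + \langle a,b\rangle$ and summing against $p_i$ gives
\[
\sum_{i=1}^n p_i\langle x_i - a, y_i - b\rangle = \sum_{i=1}^n p_i\langle x_i,y_i\rangle - \Big\langle \sum_{i=1}^n p_i x_i, b\Big\rangle - \Big\langle a, \sum_{i=1}^n p_i y_i\Big\rangle + \langle a,b\rangle,
\]
where $\sum_i p_i = 1$ is used on the last term. Since $\sum_i p_i(x_i - a) = \sum_i p_i x_i - a$ and likewise for $y$ (again by normalization), the subtracted term $\langle \sum_i p_i(x_i-a),\sum_i p_i(y_i-b)\rangle$ expands in the same shape with $\sum_i p_i x_i$ in place of $x_i$ and $\sum_i p_i y_i$ in place of $y_i$. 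Subtracting the two expansions, the three cross terms $-\langle\sum_i p_i x_i,b\rangle$, $-\langle a,\sum_i p_i y_i\rangle$ and $+\langle a,b\rangle$ cancel, leaving exactly $\sum_i p_i\langle x_i,y_i\rangle - \langle\sum_i p_i x_i,\sum_i p_i y_i\rangle = G_{\overline{p}}(\overline{x},\overline{y})$.

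Identity (\ref{3.3}) is then just the special case $\overline{y}=\overline{x}$, $b=a$ of (\ref{3.2}), which reads $G_{\overline{p}}(\overline{x},\overline{x}) = \sum_i p_i|x_i - a|^2 - |\sum_i p_i x_i - a|^2$; the stated inequality follows at once from axiom (iv), since $|\sum_i p_i x_i - a|^2 = \langle \sum_i p_i x_i - a, \sum_i p_i x_i - a\rangle \geq 0$. I do not expect any genuine obstacle: the argument is pure bookkeeping, and the only point deserving attention is the repeated and essential use of $\sum_i p_i = 1$, without which none of the three equalities would hold.
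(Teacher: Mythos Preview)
Your proposal is correct and follows essentially the same route as the paper: both proofs expand the right-hand sides of (\ref{3.1}) and (\ref{3.2}) directly, use $\sum_i p_i = 1$ to cancel the cross terms, and then specialize (\ref{3.2}) to $\overline{y}=\overline{x}$, $b=a$ for (\ref{3.3}), invoking positivity for the final inequality.
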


\begin{proof}
For every $a\in X$ a simple calculation shows that
\begin{multline*}
\sum_{i=1}^np_i\Big(\alpha_i-\sum_{j=1}^np_j\alpha_j\Big)(x_i-a)=\sum_{i=1}^np_i\alpha_i x_i-\sum_{j=1}^n p_j
\alpha_j\sum_{i=1}^np_ix_i\notag\\
-a\sum_{i=1}^np_i\alpha_i+a\sum_{i=1}^np_i\sum_{j=1}^n p_j \alpha_j\\
=\sum_{i=1}^np_i\alpha_i x_i-\sum_{i=1}^n p_i \alpha_i \sum_{i=1}^n p_ix_i.
\end{multline*}

For every $a,b\in X$, a simple calculation shows that

\begin{multline*}
\sum_{i=1}^n p_i\left<x_i -a, y_i -b \right>-\left< \sum_{i=1}^n p_i (x_i -a), \sum_{i=1}^n p_i (y_i-b)\right>\notag\\
=\sum_{i=1}^n p_i\big(\langle x_i,y_i\rangle-\langle x_i,b\rangle-\langle a,y_i\rangle+\langle a,b\rangle\big)\\
-\left< \sum_{i=1}^n p_ix_i -a, \sum_{i=1}^n p_iy_i-b\right>\\
=\sum_{i=1}^n p_i \langle x_i ,y_i \rangle-\left< \sum_{i=1}^n p_i x_i, \sum_{i=1}^n p_i y_i \right>=
G_{\overline{p}}(\overline{x},\overline{y}).
\end{multline*}
In particular for $a=b, x_i=y_i$ we have
\begin{multline*}
G_{\overline{p}}(\overline{x},\overline{x})=\sum_{i=1}^n p_i\left<x_i -a, x_i -a \right>-\left< \sum_{i=1}^n p_i (x_i -a), \sum_{i=1}^n p_i (x_i-a)\right>\notag\\
=\sum_{i=1}^n p_i\left|x_i -a\right|^2-\left| \sum_{i=1}^n p_i (x_i -a)\right|^2\leq\sum_{i=1}^n p_i\left|x_i -a\right|^2.\\
\end{multline*}

\end{proof}
In the following Theorem we give a generalization of Theorem \ref{t1.1} for semi-inner product
$C^*$-modules.

\begin{theorem}\label{t3.2}
Let $X$ be a semi-inner product $C^*$-module, $a,b\in X$ and $\overline{p}=(p_1,...,p_n)\in \mathbb{R}^n$ a probability vector. If $\overline{x}=(x_1,...,x_n), \overline{y}=(y_1,...,y_n)\in X^n$,
then the following inequality holds
\begin{multline}\label{3.8}
\left|\sum_{i=1}^n p_i\left<x_i, y_i \right>-\left< \sum_{i=1}^n p_i x_i, \sum_{i=1}^n p_i y_i\right>\right|^2\\
\leq \left\|\sum_{i=1}^n p_i\left|x_i -a\right|^2-\left| \sum_{i=1}^n p_i x_i -a\right|^2\right\|
\left(\sum_{i=1}^n p_i\left|y_i -b\right|^2-\left| \sum_{i=1}^n p_i y_i -b\right|^2\right)\\
\leq\left(\sum_{i=1}^n p_i\left\|x_i -a\right\|^2\right)\left(\sum_{i=1}^n p_i\left|y_i -b\right|^2\right)
\end{multline}
\end{theorem}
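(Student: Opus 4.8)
The plan is to reduce the statement to the Schwarz inequality for a suitable semi-inner product, exactly as in the proof of Theorem~\ref{t3.1}. First I would observe that by \lemref{l2.1}, formula~\eqref{3.2}, the quantity $G_{\overline{p}}(\overline{x},\overline{y})$ can be rewritten using the shifted tuples $x_i-a$ and $y_i-b$; this is the whole point of introducing the free parameters $a,b\in X$. So set $u_i:=x_i-a$ and $v_i:=y_i-b$, and note that
\[
G_{\overline{p}}(\overline{x},\overline{y})=\sum_{i=1}^n p_i\langle u_i,v_i\rangle-\Big\langle \sum_{i=1}^n p_iu_i,\sum_{i=1}^n p_iv_i\Big\rangle.
\]

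The key step is to recognize that the bilinear map $(\overline{u},\overline{v})\mapsto \sum_i p_i\langle u_i,v_i\rangle-\langle\sum_i p_iu_i,\sum_i p_iv_i\rangle$ is itself an $\mathcal{A}$-valued semi-inner product on $X^n$; this is a standard verification (linearity in the second slot, $\mathcal{A}$-linearity, $*$-symmetry follow from the corresponding properties of $\langle\cdot,\cdot\rangle$, and positivity is precisely the inequality $G_{\overline{p}}(\overline{u},\overline{u})=\sum_i p_i|u_i|^2-|\sum_i p_iu_i|^2\geq 0$ coming from convexity of $u\mapsto |u|^2$, which is the content of \eqref{3.3} read with $a=0$). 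Once this is in hand, I would apply the Schwarz inequality~\eqref{2.2} to this semi-inner product:
\[
\big|G_{\overline{p}}(\overline{x},\overline{y})\big|^2
\leq \Big\|G_{\overline{p}}(\overline{x},\overline{x})\Big\|\cdot G_{\overline{p}}(\overline{y},\overline{y}),
\]
which, after substituting the identities from \eqref{3.3} for both $G_{\overline{p}}(\overline{x},\overline{x})$ and $G_{\overline{p}}(\overline{y},\overline{y})$, gives the first inequality in \eqref{3.8} verbatim.

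For the second inequality in \eqref{3.8} I would simply bound each factor by dropping the subtracted term: $\big\|\sum_i p_i|x_i-a|^2-|\sum_i p_ix_i-a|^2\big\|\leq \big\|\sum_i p_i|x_i-a|^2\big\|\leq \sum_i p_i\|x_i-a\|^2$, using that $|\sum_i p_ix_i-a|^2\geq 0$ and the triangle inequality together with $\||x_i-a|^2\|=\|x_i-a\|^2$; likewise $\sum_i p_i|y_i-b|^2-|\sum_i p_iy_i-b|^2\leq \sum_i p_i|y_i-b|^2$ in the order of $\mathcal{A}$. Multiplying the two bounds yields the final line.

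I do not anticipate a genuine obstacle here; the only thing requiring care is the claim that $G_{\overline{p}}(\cdot,\cdot)$ satisfies axiom~(iv) (positivity), i.e. that $\sum_i p_i|u_i|^2-|\sum_i p_iu_i|^2\geq 0$ in $\mathcal{A}$. This is the Jensen/convexity inequality for the map $u\mapsto\langle u,u\rangle$ and is exactly what \lemref{l2.1} records in \eqref{3.3}; so the argument is self-contained modulo results already proved in the excerpt. The slight subtlety worth a sentence in the writeup is that \eqref{2.2} requires the scalar norm on the left factor but keeps the $\mathcal{A}$-valued quantity on the right, which is precisely why the final bound is asymmetric (a norm on the $x$-side, an $\mathcal{A}$-valued expression on the $y$-side).
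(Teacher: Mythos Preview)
Your proposal is correct and follows essentially the same route as the paper: verify that $G_{\overline{p}}(\cdot,\cdot)$ is an $\mathcal{A}$-valued semi-inner product on $X^n$, apply the Schwarz inequality~\eqref{2.2}, and then invoke~\eqref{3.3} to obtain both lines of~\eqref{3.8}. The only minor difference is in the justification of positivity: the paper establishes $G_{\overline{p}}(\overline{x},\overline{x})\geq 0$ via the double-sum identity $G_{\overline{p}}(\overline{x},\overline{x})=\tfrac{1}{2}\sum_{i,j}p_ip_j\langle x_i-x_j,x_i-x_j\rangle$, whereas you cite convexity and~\eqref{3.3}; note that~\eqref{3.3} itself only records the identity and the upper bound, not the nonnegativity, so in a formal writeup you would still want either the double-sum computation or an explicit convexity argument.
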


\begin{proof}
A simple calculation shows that
\begin{equation*}
\sum_{i=1}^n p_i \langle x_i ,y_i \rangle-\left< \sum_{i=1}^n p_i x_i, \sum_{i=1}^n p_i y_i \right>=\frac{1}{2}
\sum_{i,j=1}^n p_ip_j\left<x_i -x_j ,y_i -y_j\right>,
\end{equation*}
therefore
\begin{equation*}
G_{\overline{p}}(\overline{x},\overline{x})=\frac{1}{2}\sum_{i,j=1}^n p_ip_j\left<x_i -x_j ,x_i -x_j\right>\geq 0.
\end{equation*}
It is easy to show that $G_{\overline{p}}(.,.)$ is an $\mathcal{A}$-value semi-inner product on $X^n$,
so Schwarz inequality (\ref{2.2}) holds i.e.,
\begin{equation}\label{3.9}
|G_{\overline{p}}(\overline{x},\overline{y})|^2\leq \|G_{\overline{p}}(\overline{x},\overline{x})\|
G_{\overline{p}}(\overline{y},\overline{y}).
\end{equation}
From (\ref{3.3}), we get
\begin{equation}\label{3.10}
\|G_{\overline{p}}(\overline{x},\overline{x})\|=\left\|\sum_{i=1}^n p_i\left|x_i -a\right|^2-\left|
\sum_{i=1}^n p_i x_i -a\right|^2\right\|
\leq \sum_{i=1}^n p_i\left\|x_i -a\right\|^2
\end{equation}
and
\begin{equation}\label{3.11}
G_{\overline{p}}(\overline{y},\overline{y})=\sum_{i=1}^n p_i\left|y_i -b\right|^2-\left| \sum_{i=1}^n p_i y_i -b\right|^2
\leq \sum_{i=1}^n p_i|y_i -b|^2.
\end{equation}
From  inequalities (\ref{3.9}), (\ref{3.10}) and (\ref{3.11}) we obtain the inequality (\ref{3.8}).
\end{proof}

Since every inner-product space $H$ can be regarded as an inner product $\mathbb{C}$-module, therefore the following inequality
(\ref{3.13}) is a generalization of inequality (\ref{1.3}).

\begin{corollary}\label{c3.1}
Let $X$ be a semi-inner product $C^*$-module, $a,b\in X$ and $\overline{p}=(p_1,...,p_n)\in \mathbb{R}^n$ a probability vector. If $\overline{x}=(x_1,...,x_n), \overline{y}=(y_1,...,y_n)\in X^n$, $r\geq 0, s\geq 0$ are such that
\begin{equation}
\|x_i-a\|\leq r,\quad \|y_i-b\|\leq s,\text{ for all i }\in \{1, . . . , n\},\label{3.12}
\end{equation}
then the following inequality holds
\begin{equation}
\left\|\sum_{i=1}^n p_i\left<x_i, y_i \right>-\left< \sum_{i=1}^n p_i x_i, \sum_{i=1}^n p_i y_i\right>\right\|\leq rs.\label{3.13}
\end{equation}
The constant 1 coefficient of $rs$ in the inequality (\ref{3.13}) is best possible in
the sense that it cannot be replaced by a smaller quantity.
\end{corollary}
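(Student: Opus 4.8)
The plan is to read off (\ref{3.13}) from \thmref{t3.2}: inequality (\ref{3.8}) already contains everything, so it only remains to pass from the order relation in $\mathcal{A}$ to the $C^*$-norm and to feed in the hypothesis (\ref{3.12}); sharpness of the coefficient $1$ will then be settled by a direct scalar example.

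For the inequality itself, I would take the outermost estimate in (\ref{3.8}),
\[
\bigl|G_{\overline{p}}(\overline{x},\overline{y})\bigr|^2\ \le\ \Bigl(\sum_{i=1}^n p_i\|x_i-a\|^2\Bigr)\Bigl(\sum_{i=1}^n p_i|y_i-b|^2\Bigr),
\]
which is an inequality between positive elements of $\mathcal{A}$, and apply the $C^*$-norm. Since the norm is monotone on the positive cone and satisfies the $C^*$-identity $\bigl\||c|^2\bigr\|=\|c\|^2$ for $c\in\mathcal{A}$, and since a nonnegative scalar factors out of the norm, the right-hand side becomes $\bigl(\sum_{i}p_i\|x_i-a\|^2\bigr)\,\bigl\|\sum_{i}p_i|y_i-b|^2\bigr\|$; the triangle inequality then bounds $\bigl\|\sum_{i}p_i|y_i-b|^2\bigr\|$ by $\sum_{i}p_i\|y_i-b\|^2$. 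Invoking (\ref{3.12}) together with $\sum_i p_i=1$, the two sums are at most $r^2$ and $s^2$ respectively, so $\|G_{\overline{p}}(\overline{x},\overline{y})\|^2\le r^2 s^2$, i.e.\ (\ref{3.13}).

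For the optimality of $1$, I would exhibit equality in the smallest module available, $\mathcal{A}=X=\mathbb{C}$ with $\langle x,y\rangle=\overline{x}\,y$. Take $n=2$, $p_1=p_2=\tfrac12$, $a=b=0$, and $x_1=r=-x_2$, $y_1=s=-y_2$. Then $\|x_i-a\|=r$ and $\|y_i-b\|=s$ for $i=1,2$, while $\sum_i p_i x_i=\sum_i p_i y_i=0$ and $\sum_i p_i\langle x_i,y_i\rangle=rs$, so the left-hand side of (\ref{3.13}) equals $rs$; the same example, padded with zero-weight coordinates, works for every $n\ge 2$. Alternatively, sharpness descends from the known optimality of $\tfrac14$ in \thmref{t1.1} under the substitution $a=\tfrac{x+X}{2}$, $r=\tfrac12\|X-x\|$, $b=\tfrac{y+Y}{2}$, $s=\tfrac12\|Y-y\|$.

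I do not expect a genuine obstacle. The one step that warrants care is the transition from the $\mathcal{A}$-valued inequality (\ref{3.8}) to the scalar bound, which rests on monotonicity of the $C^*$-norm on positive elements and on the $C^*$-identity $\bigl\||c|^2\bigr\|=\|c\|^2$; the rest is bookkeeping with the triangle inequality and the normalization $\sum_i p_i=1$.
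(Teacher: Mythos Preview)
Your proof is correct and follows essentially the same route as the paper: derive (\ref{3.13}) from the last estimate in (\ref{3.8}) together with (\ref{3.12}), and exhibit equality with $n=2$, $p_1=p_2=\tfrac12$, and antipodal data. The only cosmetic difference is in the sharpness example: the paper works inside the given module $X$ by choosing any $e\in X$ with $\|\langle e,e\rangle\|=1$ and setting $x_{1,2}=a\pm re$, $y_{1,2}=b\pm se$, whereas you specialize to $X=\mathbb{C}$; both yield $\|G_{\overline{p}}(\overline{x},\overline{y})\|=rs$ and hence $c\ge 1$.
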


\begin{proof}
From inequalities (\ref{3.8}) and (\ref{3.12}) we obtain (\ref{3.13}).

To prove the sharpness of the constant 1 in the inequality in (\ref{3.13}), let us assume that, under the assumptions of
the theorem, the
inequalities hold with a constant $c>0$, i.e.,

\begin{equation}
\|G_{\overline{p}}(\overline{x},\overline{y})\|\leq crs \label{3.14}.
 \end{equation}
Assume that $n = 2, p_1 = p_2 = \frac{1}{2}$ and $e$ is an element of $X$ such that $\|\langle e,e\rangle\|=1$.
We put
\begin{align*}
x_1&=a+re,~~~y_1=b+se\\
x_2&=a-re,~~~y_2=b-se,
\end{align*}
then, obviously,
\begin{equation*}
\|x_i-a\|\leq r,\quad \|y_i-b\|\leq s,\quad (i=1,2),
\end{equation*}
which shows that the condition (\ref{3.12}) holds. If we replace $n, p_1, p_2, x_1, x_2, y_1, y_2$ in (\ref{3.14}), we obtain
\begin{equation*}
\|G_{\overline{p}}(\overline{x},\overline{y})\|=rs\leq crs,
\end{equation*}
from where we deduce that $c\geq 1$, which proves the sharpness of the constant 1.
\end{proof}

The following Remark \ref{r3.1}(ii) is a generalization of Theorem \ref{t1.2} for semi-inner product $C^*$-modules.

\begin{remark}\label{r3.1}\rm{
\begin{enumerate}
\item[(i)]
Let $\mathcal{A}$ be a $C^{\ast}$-algebra, and $\overline{p}=(p_1,...,p_n)\in \mathbb{R}^n$ a probability vector. If $a, b, a_i, b_i, (i=1,2,...,n) \in \mathcal{A}, r\geq 0, s\geq0$ are such that
\begin{equation*}
\|a_i-a\|\leq r,~~\|b_i-b\|\leq s, \text{ for all i }\in \{1, . . . , n\},
\end{equation*}
it is known that $\mathcal{A}$ is a Hilbert $C^*$-module over
itself with the inner product defined by $\left\langle a,b\right\rangle :=a^{\ast }b$.
In this case (\ref{3.13}) implies that
\begin{align*}
\left\|\sum_{i=1}^n p_ia_i^*b_i-\sum_{i=1}^n p_ia_i^*.\sum_{i=1}^n p_ib_i\right\|\leq rs.
\end{align*}
Since
\begin{equation*}
\|a_i^*-a^*\|\leq r,~~ \text{ for all i }\in \{1, . . . , n\},
\end{equation*}
we deduce
\begin{align*}
\left\|\sum_{i=1}^n p_ia_ib_i-\sum_{i=1}^n p_ia_i.\sum_{i=1}^n p_ib_i\right\|\leq rs.
\end{align*}

\item[(ii)] Let $X$ be a semi-inner product $C^{\ast}$-module, $a\in X, \overline{\alpha}=(\alpha_1, . . . , \alpha_n)
 \in \mathbb{K}^n$ and $\overline{p}=(p_1,...,p_n)\in \mathbb{R}^n$ a probability vector. If $\overline{x}=(x_1,...,x_n)\in X^n, r\geq 0$
 are such that
\begin{equation*}
\|x_i-a\|\leq r,\text{ for all i }\in \{1, . . . , n\},
\end{equation*}
holds, from equality (\ref{3.1}) we obtain
\begin{align}
\left\|\sum_{i=1}^np_i\alpha_i x_i-\sum_{i=1}^n p_i \alpha_i \sum_{i=1}^n p_ix_i\right\|
&\leq r\sum_{i=1}^np_i\left|\alpha_i-\sum_{i=1}^np_j\alpha_j\right|\label{3.15}\\
&\leq r\left[\sum_{i=1}^np_i|\alpha_i|^2-\left|\sum_{i=1}^np_i\alpha_i\right|^2\right]^{\frac{1}{2}}.\notag
\end{align}
The constant 1 in the first and second inequalities in (\ref{3.15}) is best possible.
Since every Hilbert space is a Hilbert $\mathbb{C}$-module, the inequality (\ref{3.15}) is a generalization of (\ref{1.4}).
\end{enumerate}}
\end{remark}

\section{Applications}

In this section we give applications of Theorem \ref{t3.2} for the approximation of some discrete transforms such as the
discrete Fourier and the Melin transforms for bounded linear operators on a Hilbert space.

Let $X$ be a semi-inner product $C^*$-module on $C^*$-algebra $\mathcal{A}$ and  $x=(x_1,...,x_n), y=(y_1,...,y_n)\in X^n$.
For a given $\omega\in \mathbb{R}$,
define the \textit{discrete Fourier transform}

\begin{equation*}
\mathcal{F}_\omega(x)(m)=\sum_{k=1}^n \exp(2\omega imk)\times x_k,\quad m=1,...,n.
\end{equation*}
The element $\sum_{k=1}^n \exp(2\omega imk)\times \langle x_k, y_k\rangle$ of $\mathcal{A}$ is called
Fourier transform of the vector $(\langle x_1,y_1\rangle,...,\langle x_k,y_k\rangle)\in \mathcal{A}^n$
and will be denoted by
\begin{equation*}
\mathcal{F}_\omega(x,y)(m)=\sum_{k=1}^n \exp(2\omega imk)\times \langle x_k,y_k\rangle\quad m=1,...,n.
\end{equation*}
We can also consider the \textit{Mellin transform}
\begin{equation*}
\mathcal{M}(x)(m)=\sum_{k=1}^n k^{m-1}x_k, \quad m=1,...,n.
\end{equation*}
of the vector $x=(x_1,...,x_n) \in X^n$.\\
The Mellin transform of the vector $(\langle x_1,y_1\rangle,...,\langle x_k,y_k\rangle)\in \mathcal{A}^n$ is defined by\\
$\sum_{k=1}^n k^{m-1}\langle x_k,y_k\rangle$ and will
be denoted by
\begin{equation*}
\mathcal{M}(x,y)(m)=\sum_{k=1}^n k^{m-1}\langle x_k,y_k\rangle.
\end{equation*}

\begin{example}
Let $A_1,...,A_n$ and $B_1,...,B_n$ be bounded linear operators in $B(H_1, H_2)$. It is known that $B(H_1, H_2)$
is a Hilbert $C^*$-module over $B(H_1)$ with the inner product defined by $\left\langle X,Y\right\rangle :=X^{\ast }Y$, therefore for every $A, B\in B(H_1)$, from the inequality (\ref{3.8})
we obtain

\begin{multline}\label{4.1}
\left|\sum_{k=1}^n\exp (2\omega imk)A_{k}^*B_k-\left(\frac{1}{n}\sum_{k=1}^n A_{k}^*\right)\left(\sum_{k=1}^n\exp (2\omega imk)B_k\right)\right|^2\\
\leq\left\|\sum_{k=1}^n|A_k-A|^2-\left|\frac{1}{n}\sum_{k=1}^nA_k-A\right|^2\right\|\\
\times\left(\sum_{k=1}^n|\exp (2\omega imk)B_k-B|^2-\left|\frac{1}{n}\sum_{k=1}^n\exp (2\omega imk)B_k-B\right|^2\right)\\
\leq\left(\sum_{k=1}^n\|A_k-A\|^2\right)\left(\sum_{k=1}^n|\exp (2\omega imk)B_k-B|^2\right).
\end{multline}

and
\begin{multline}\label{4.2}
\left|\sum_{k=1}^nk^{m-1}A_{k}^*B_k-\left(\frac{1}{n}\sum_{k=1}^n A_{k}^*\right)\left(\sum_{k=1}^nk^{m-1}B_k\right)\right|^2\\
\leq\left\|\sum_{k=1}^n|A_k-A|^2-\left|\frac{1}{n}\sum_{k=1}^nA_k-A\right|^2\right\|\\
\times\left(\sum_{k=1}^n|k^{m-1}B_k-B|^2-\left|\frac{1}{n}\sum_{k=1}^nk^{m-1}B_k-B\right|^2\right)\\
\leq\left(\sum_{k=1}^n\|A_k-A\|^2\right)\left(\sum_{k=1}^n|k^{m-1}B_k-B|^2\right).
\end{multline}
\end{example}

\begin{example}
Let $B,A,A_1,...,A_n$ be bounded linear operators on the Hilbert space $H$, $\overline{\alpha}=(\alpha_1,...,\alpha_n)\in \mathbb{K}^n$.
and $I$ be identity operator on $H$.
\begin{multline}\label{4.3}
\left|\sum_{k=1}^n p_k\alpha_k A_k -\left( \sum_{k=1}^n p_k \alpha_k\right)\left( \sum_{k=1}^n p_k A_k\right)\right|^2\\
=\left|\sum_{k=1}^n p_k\left<\overline{\alpha_k}I, A_k \right>-\left< \sum_{k=1}^n p_k \overline{\alpha_k}I,
\sum_{k=1}^n p_k A_k\right>\right|^2\\
\leq \left\|\sum_{k=1}^n p_k\left|\overline{\alpha}_kI -A\right|^2-\left| \sum_{k=1}^n p_k \overline{\alpha}_kI -A\right|^2\right\|
\left(\sum_{k=1}^n p_k\left|A_k -B\right|^2-\left| \sum_{k=1}^n p_k A_k -B\right|^2\right),
\end{multline}
for $A=B=0$ we get
\begin{multline}\label{4.4}
\left|\sum_{k=1}^n p_k\alpha_k A_k -\left( \sum_{k=1}^n p_k \alpha_k\right)\left( \sum_{k=1}^n p_k A_k\right)\right|^2\\
\leq \left(\sum_{k=1}^n p_k\left|\alpha_k\right|^2-\left| \sum_{k=1}^n p_k \alpha_k\right|^2\right)
\left(\sum_{k=1}^n p_k\left|A_k\right|^2-\left| \sum_{k=1}^n p_k A_k\right|^2\right).
\end{multline}
\end{example}

A simple calculation shows that (see the proof of Theorem 59 in \cite{dra}),
\begin{equation*}
\sum_{k=1}^n \exp(2\omega imk)=\frac{\sin(\omega mn)}{\sin(\omega m)}\times \exp[\omega (n+1)im].
\end{equation*}
Putting $\alpha_k=\exp(2\omega imk),~p_k=\frac{1}{n}$, in (\ref{4.4}), we get
\begin{multline}\label{4.5}
\left|\sum_{k=1}^n\exp (2\omega imk)A_{k}-\frac{\sin(\omega mn)}{\sin(\omega m)}\exp[\omega(n+1)im]\times \frac{1}{n}
\sum_{k=1}^n A_k\right|^2\\
\leq \left[n^2-\frac{\sin^2(\omega mn)}{\sin^2(\omega m)}\right]\left[\frac{1}{n}\sum_{k=1}^n|A_k|^2-\frac{1}{n^2}\left|\sum_{k=1}^nA_k\right|^2\right].
\end{multline}
Also, Putting $\alpha_k=k^{m-1},~p_k=\frac{1}{n}$, in (\ref{4.4}), we obtain
\begin{multline}\label{4.6}
\Big|\sum_{i=1}^nk^{m-1}A_k-S_{m-1}(n).\frac{1}{n}\sum_{k=1}^n A_k\Big|^2\\
\leq \big[nS_{2m-2}(n)-S_{m-1}^2(n)\big]\left[\frac{1}{n}\sum_{k=1}^n|A_k|^2-\frac{1}{n^2}\left|\sum_{k=1}^nA_k\right|^2\right]
, m\in\{1,...,n\},
\end{multline}
where $S_p(n), p\in \mathbb{R}, n\in \mathbb{N}$ is the $p$-powered sum of the first $n$ natural
numbers, i.e.,
\begin{equation*}
S_p(n):=\sum_{k=1}^n k^p.
\end{equation*}
For the following particular values of Mellin Transform (see \cite[Corollary 4]{dra1}), we have
\begin{multline*}
\left|\sum_{k=1}^n kA_k-\frac{n+1}{2}\sum_{k=1}^n A_k\right|^2\leq \left[\frac{n^2(n-1)(n+1)}{12}\right]\left[\frac{1}{n}\sum_{k=1}^n|A_k|^2-\frac{1}{n^2}\left|\sum_{k=1}^nA_k\right|^2\right] \\
\leq\left[\frac{n(n-1)(n+1)}{12}\right]\sum_{k=1}^n|A_k|^2,
\end{multline*}
and
\begin{multline*}
\Big|\sum_{k=1}^n k^2A_k-\frac{(n+1)(2n+1)}{6}\sum_{k=1}^n A_k\Big|^2\\
\leq\left(\frac{n^2(n-1)(n+1)(2n+1)(8n+11)}{180}\right)\left[\frac{1}{n}\sum_{k=1}^n|A_k|^2-\frac{1}{n^2}\left|
\sum_{k=1}^nA_k\right|^2\right]\\
\leq\left(\frac{n(n-1)(n+1)(2n+1)(8n+11)}{180}\right)\sum_{k=1}^n|A_k|^2.
\end{multline*}

There exist other examples for the approximation of
some discrete transforms such as polynomials with coefficients in a semi-inner product $C^*$-module.
However, the details are omitted but each of them can be proven
 in a similar manner as this section.



\end{document}